%% file: Quasi_F_split_non_uniruled.tex
\input{macros}

\author{Jefferson Baudin}
\date{}

\usepackage[margin=.91in]{geometry}
\setlist{  
	listparindent=\parindent,
	parsep=0pt,
}

\subjclass[2020]{14G17, 14E30, 14J32}
\keywords{quasi-$F$-splittings, uniruledness, log canonical singularities, $K$-trivial varieties}

\title[Global and local geometry of $K$--trivial quasi--$F$--split varieties]{On the global and local geometry of quasi--$F$--split varieties with trivial canonical bundle}

\begin{document}
	\maketitle
	\begin{abstract}
		We solve certain questions related to the geometry and singularities of quasi-$F$-split varieties with trivial canonical bundle. Namely, we prove that:
		\begin{enumerate}
			\item regular quasi-$F^{\infty}$-split varieties are not geometrically uniruled (this generalizes and significantly simplifies the earlier results of \cite{Patakfalvi_Zdanowicz_Ordinary_varieties_with_trivial_canonical_bundle_are_not_uniruled}) and have geometrically canonical singularities;
			\item there exist quasi-$F$-split surfaces with trivial canonical bundle which are not quasi-$F^{\infty}$-split, answering negatively a question raised in \cite{Quasi_F_splittings_III}. 
			\item normal quasi-$F$-split varieties with trivial canonical bundle are geometrically normal (this extends a result of \cite{Kawakami_Takamatsu_Yoshikawa_Fedder_type_criteria_for_quasi_F_splittings_II});
			\item quasi-$F^e$-pure normal varieties $X$ such that  $mp^eK_X$ is Cartier for $m$ coprime to $p$ are log canonical, under a resolution of singularities hypothesis.
		\end{enumerate} 
	\end{abstract}

	%\tableofcontents
	
\section{Introduction}

\subsection{Main results}

In positive characteristic, it is common that algebraic varieties depict unusual behaviour from a characteristic zero point of view, already in the case of $K$-trivial varieties. For example, certain K3 surfaces can be unirational, and quasi-bielliptic surfaces admit fibrations where all fibers are singular. One way to prevent these pathological behaviours is to require that our varieties are $F$-split \cite{Mehta_Ramanathan_Frobenius_splitting_and_cohomology_vanishing_for_Schubert_varieties}. Indeed $F$-split $K$-trivial varieties are never uniruled \cite{Patakfalvi_Zdanowicz_Ordinary_varieties_with_trivial_canonical_bundle_are_not_uniruled}, and general fibers of their fibrations can have at worst mild singularities \cite[Theorem 6.3]{Patakfalvi_Zdanowicz_Beauville_Bogomolov_in_pos_char}. Nevertheless, one may argue that $F$-splittings are too restrictive to prevent such pathologies, and that the broader class of quasi-$F$-split varieties (\cite{Yobuko_Quasi_F_splitting_and_lifting_of_CY_varieties, Quasi_F_splittings_I, Quasi_F_splittings_II, Quasi_F_splittings_III, Quasi_F^e_splittings_and_quasi-F_regularity}) should also behave well from a characteristic zero point of view. A good toy example to see this is the case of a K3 surface $S$, depending on its Artin-Mazur height $h(S) \in \bN \cup \{\infty\}$: it can be $F$-split (i.e. $h(S) = 1$), quasi-$F$-split (i.e. $h(S) < \infty$) or supersingular (i.e. $h(S) = \infty$). Since $H^2(S, W\cO_{S, \bQ}) \neq 0$ for a quasi-$F$-split K3 surface \cite{Yobuko_Quasi_F_splitting_and_lifting_of_CY_varieties}, it follows from the methods of \cite{Patakfalvi_Zdanowicz_Ordinary_varieties_with_trivial_canonical_bundle_are_not_uniruled} that they cannot be uniruled, while supersingular K3 surfaces might actually be! It is therefore natural to try to extend these types of result to the quasi-$F$-split realm, and this is one of the objectives of this paper.

From now on, a variety means an integral, separated scheme of finite type over a field $K$ that is finitely generated over a perfect field $k$ of characteristic $p > 0$ (hence $K$ might \textbf{not} be perfect). The following result is a generalization of \cite{Patakfalvi_Zdanowicz_Ordinary_varieties_with_trivial_canonical_bundle_are_not_uniruled}, and our proof is also significantly simpler.

\begin{thm_letter}\label{main_thm_qFsplit_non_uniruled}
	Let $X$ be a regular proper variety such that $K_X \equiv 0$ and $H^0(X, \cO_X) = K$. If $X$ is quasi-$F^{\infty}$-split, then it is not geometrically uniruled and has geometrically canonical singularities. 
\end{thm_letter}

We also have a version of \autoref{main_thm_qFsplit_non_uniruled} admitting mild singularities, see \autoref{cor_main_thm_regular_case}. We do not know whether requiring mere quasi-$F$-splitting instead of quasi-$F^{\infty}$-splitting is enough to obtain non-uniruledness. A first question one may ask in this direction is the following: does there exist a quasi-$F$-split $K$-trivial variety which is not quasi-$F^{\infty}$-split? The local counterpart of this question was raised in \cite{Quasi_F_splittings_III} (after their Theorem H), to which we give the following negative answer:

\begin{thm_letter}\label{intro_example}
	Let $S$ be a smooth projective quasi-$F$-split surface with $\omega_S\cong \cO_S$ over an algebraically closed field of characteristic $p > 0$. Then the following holds:
	
	\begin{enumerate}
		\item if $p \neq 2$, then $S$ is quasi-$F^{\infty}$-split;
		\item if $p = 2$, then $S$ might not be quasi-$F^2$-split.
	\end{enumerate}
	
	Consequently, there exists a quasi-$F$-pure, log canonical threefold singularity $X$ such that $K_X$ is Cartier, but which is not quasi-$F^{\infty}$-pure.
\end{thm_letter}

We conjecture that for any $d \geq 1$, there exists an integer $p_d > 0$ such that if $p > p_d$, any smooth proper quasi-$F$-split $K$-trivial variety of dimension $d$ over an algebraically closed field of characteristic $p$ is quasi-$F^{\infty}$-split. Even though this problem is certainly very hard in higher dimensions, we hope that it is solvable in dimension $3$. \\
%	As a matter of fact, there exists a unique class of $K$-trivial smooth surfaces $S$ over an algebraically closed field which is quasi-$F$-split but not quasi-$F^{\infty}$-split (in fact, it is not $n$-quasi-$F^2$-split for any $n \geq 1$). These surfaces are explicitly given by the $\bZ/2\bZ$-quotient of $E_1 \times E_2$, where $E_1$ is an ordinary elliptic curve acted on by a $2$-torsion point, and $E_2$ is a supersingular elliptic curve acted on by the inverse involution (note that this quotient is not uniruled, so there are no counterexamples to our question in dimension $2$). The relevant computations and proofs will appear in a note of Kawakami, Takagi and Yoshikawa.
%\end{rem_blank}

As a result of our methods, we also prove the following result, generalizing \cite[Theorem 4.13]{Kawakami_Takamatsu_Yoshikawa_Fedder_type_criteria_for_quasi_F_splittings_II}:
	
\begin{thm_letter}\label{intro_qF_split_implies_geom_normal}
	Let $X$ be a normal proper variety such that $K_X \equiv 0$ and $H^0(X, \cO_X) = K$. If $X$ is quasi-$F$-split, then $X$ is geometrically normal.
\end{thm_letter}

\begin{rem_blank}
	Recall that normal proper varieties $X$ which are globally $F$-split have geometrically canonical (and even strongly $F$-regular) singularities, even without the $K$-trivial assumption thanks to \cite[Theorem 6.3]{Patakfalvi_Zdanowicz_Beauville_Bogomolov_in_pos_char}, so our $K$-triviality assumption might seem odd at a first glance. However, this assumption is necessary, since for example all regular conics are quasi-$F$-split by \cite[Corollary 5.16]{Quasi_F_splittings_I}, while they may not even be geometrically reduced when $p = 2$.
\end{rem_blank}

Let us now move to a local counterpart of quasi-$F$-split $K$-trivial varieties, namely quasi-$F$-pure singularities $X$ such that $K_X$ is $\bQ$-Cartier. It was recently proven in \cite[Theorem A]{Sato_Takagi_Yoshikawa_Quasi_F_splittings_versus_log_canonicity} (see also \cite{Kawakami_Takagi_Yoshikawa_Quasi_F_singularities_and_singularities_in_birational_geometry}) that quasi-$F^{\infty}$-pure normal singularities are log canonical. In this paper, we tackle this question in the case of quasi-$F^e$-purity:

\begin{thm_letter}\label{intro_qFsplit_implies_lc}
	Let $R$ be a normal, quasi-$F^e$-pure local ring such that $mp^eK_R$ is Cartier for some $m > 0$ coprime to $p$. If a prime-to-$p$ index cover of $\Spec(R)$ admits a log resolution, then $R$ has log canonical singularities.
\end{thm_letter}

We refer the reader to \autoref{qF_pure_implies_lc_improved} and the paragraph right before to explain what we mean by a ``prime-to-$p$ index cover''. As suggested by Shunsuke Takagi, we can extend \autoref{intro_qFsplit_implies_lc} to the case of quasi-$F$-split pairs $(R, \Delta)$ where $\Delta$ has standard coefficients and $K_R + \Delta$ is $\bZ_{(p)}$-Cartier by \cite[Proposition 3.21]{Sato_Takagi_Yoshikawa_Quasi_F_splittings_versus_log_canonicity}, under the assumption that the index-one cover in  \emph{loc. cit.} admits a log resolution.

\subsection{Acknowledgments}

I would like to thank Tatsuro Kawakami, Alapan Mukhopadhyay, Shunsuke Takagi, Hiromu Tanaka, Jakub Witaszek, Zheng Xu, Fuetaro Yobuko, and Yang Zhang for interesting conversations related to the content of this article.  Financial support was provided by the grant \#2000201-231484 from the Swiss National Science Foundation.

\section{Notations and definitions}
	Throughout the paper, we fix a perfect field $k$ of characteristic $p > 0$, and a finitely generated field $K$ over $k$. A \emph{variety} denotes an integral, separated scheme of finite type over $K$. A \emph{big open} subset is an open subset whose complement has codimension at least $2$. For notions of singularities of the minimal model program (e.g. canonical, klt and log canonical singularities), we refer the reader to \cite{Kollar_Singularities_of_the_MMP}.
		
	Fix integers $n, e \geq 1$, and let $X$ be a variety over $K$, with induced morphism $\pi \colon X \to \Spec(k)$. We will denote by $K(X)$ the field of fractions of $X$, and by $W_nX$ the scheme given by the locally ringed space $(X, W_n\cO_X)$, where $W_n\cO_X$ denotes the $n$--truncated Witt vectors over $X$. The induced morphism $\pi \colon W_nX \to \Spec (W_n(k))$ is therefore separated and essentially of finite type (\cite[Propositions A.1 and A.6]{Langer_Zink_De_Rham_Witt_cohomology_for_a_proper_and_smooth_morphism}), so we can define the dualizing complex $W_n\omega_X^{\bullet} \coloneqq \pi^!\cO_{W_n(k)}$ on $X$ (see \cite[5.2]{Nayak_Compactification_for_essentially_finite_type_maps}). We set $W_n\omega_X$ to be the first non--zero cohomology sheaf of $W_n\omega_X^{\bullet}$. The natural closed immersion $X \inj W_nX$ induces by duality an injection $\omega_X \inj W_n\omega_X$, and throughout we will think of sections of $\omega_X$ as living in $W_n\omega_X$. Finally, we will denote the trace of the $e$'th iterate Frobenius $F^e \colon W_nX \to W_nX$ by $C^e_n \colon F^e_*W_n\omega_X \to W_n\omega_X$ (the morphism $F^e$ is finite, since it is proper by \cite[Corollary A.7]{Langer_Zink_De_Rham_Witt_cohomology_for_a_proper_and_smooth_morphism} and affine). Given an ideal $\cI \inc \cO_X$, we set $W_n\cI \coloneqq \set{(s_1, \dots, s_n) \in W_n\cO_X}{s_i \in \cI}$. Given a $\bZ$-divisor $D$, we set $W_n\omega_{(X, D)} \coloneqq \HHom(W_n\cI, W_n\omega_X)$ (see \cite[Notation 2.2.13 and below]{Baudin_Witt_GR_vanishing_and_applications}), and $W_n\cO_X(D) \coloneqq \set{(s_1, \dots, s_n) \in W_n(K(X))}{s_i \in \cO_X(p^{i - 1}D)}$. Note that if $D$ is a $\bZ$-divisor (resp. a Cartier divisor), then this latter sheaf is coherent and $S_2$ (resp. a line bundle) by \cite[Remark 3.6, Proposition 3.8 and Proposition 3.12]{Tanaka_Vanishing_theorems_of_Kodaira_type_for_Witt_canonical_sheaves}. 
	
	Given a torsion-free $S_2$-sheaf $\cM$ on $W_nX$ (e.g. $W_n\omega_X$ by \stacksproj{0AWE}) and a $\bZ$-divisor $D$, we define $\cM(D) \coloneqq j_*(\cM|_{X_{\reg}} \otimes W_n\cO_{X_{\reg}}(D|_{X_{\reg}}))$, where $j \colon X_{\reg} \inj X$ is the inclusion of the regular locus $X_{\reg}$ of $X$.
	
	\begin{sdefn}[{\cite{Quasi_F^e_splittings_and_quasi-F_regularity}}]\label{def_qF_split}
		Let $X$ be a normal variety. We set $(-)^* \coloneqq \HHom(-, W_n\omega_X(-K_X))$ (note that for a divisor $D$, then $\cO_X(D)^* = \cO_X(-D)$).
		\begin{enumerate}
			\item Given $n, e \geq 1$ and a divisor $D$ on $X$, we set $Q^e_{X, D, n}$ to be the pushout of the diagram 
			\[ \begin{tikzcd}
				W_n\cO_X(D) \arrow[d] \arrow[rr] &  & F^e_*W_n\cO_X(p^eD) \\
				\cO_X(D).
			\end{tikzcd} \] We also define $q^eS^0_n(X, \cO_X(D)) \coloneqq \im(H^0(X, (Q^e_{X, -D, n})^*) \to H^0(X, \cO_X(D)))$.
			\item We set $q^{\infty}S^0(X, D) \coloneqq \bigcap_{e > 0} \bigcup_{n > 0} q^eS^0_n(X, D) \inc H^0(X, \cO_X(D))$.
			\item We say that $X$ is $n$-quasi-$F^e$-split (resp. quasi-$F^{\infty}$-split) if $q^eS^0_n(X, \cO_X) = H^0(X, \cO_X)$ (resp. $q^{\infty}S^0(X, \cO_X) = H^0(X, \cO_X)$). We refer the reader to \cite[Proposition 3.19]{Quasi_F^e_splittings_and_quasi-F_regularity} for an alternate definition using ``splittings''.
			\item We say that $X$ is $n$-quasi-$F^e$-pure (resp. quasi-$F^{\infty}$-pure) if it is $n$-quasi-$F^e$-split (resp. quasi-$F^{\infty}$-split) affine locally.
		\end{enumerate}
	\end{sdefn}
	
\section{Non-uniruledness and geometric properties}

We will use the following standard result. 

\begin{slem}\label{BPRZ}
	Let $\pi \colon Y \to X$ be a projective birational morphism of normal varieties, let $D$ be a Cartier divisor on $X$, and let $E_1$ and $E_2$ be effective $\pi$-exceptional divisors on $Y$, with $E_2 \neq 0$. Then $\pi_*\cO_Y(\pi^*D + E_1 - E_2)$ is naturally a strict subsheaf of $\cO_X(D)$.
\end{slem}
\begin{proof}
	Let us include a proof for completeness (we will mimic the argument in the proof of \cite[Lemma 3.5]{Baudin_Patakfalvi_Rosler_Zdanowicz_On_Gorenstein_Q_p_rational_threefolds_and_fourfolds}). By the projection formula, we may assume that $D = 0$. First, note that $\pi_*\cO_Y(E_1 - E_2)$ is torsion-free, and is equal to $\pi_*\cO_Y = \cO_X$ on a big open subset. Since the latter sheaf is $S_2$, we obtain an inclusion $\pi_*\cO_Y(E_1 - E_2) \inc \cO_X$. However, this inclusion must be strict, since for example $1 \in H^0(X, \cO_X)$ does not belong to $\pi_*\cO_Y(E_1 - E_2)$.
\end{proof}

We will also need the following rephrasing of \autoref{def_qF_split} in the case of interest for us:

\begin{slem}\label{def_quasi_stable_sections}
	Let $X$ be a normal variety, and let $n, e \geq 1$. Then we have \[q^eS^0_n(X, \omega_X) = \set{ \eta \in H^0(X, \omega_X)}{ \eta = C^e_n(\psi) \emph{ for some } \psi \in H^0(X, F^e_*W_n\omega_X)}. \] 
\end{slem}
\begin{proof}
	Applying $(-)^* = \HHom(-, W_n\omega_X(-K_X))$ to the pushout square from the definition gives a pullback square
	\[ \begin{tikzcd}
		F^e_*W_n\omega_X \arrow[rr, "C^e_n"]              &  & W_n\omega_X              \\
		{(Q^e_{X, -K_X, n})^*} \arrow[u] \arrow[rr] &  & \omega_X \arrow[u, hook]
	\end{tikzcd} \]
	Since $q^eS^0_n(X, \omega_X)$ is by definition the image of the lower map on global sections, the result follows from the explicit definition of a pullback.
\end{proof}

%\begin{srem}\label{definitions_agree}
%	This definition agrees with \cite[Definitions 3.38 and 3.45]{Quasi_F^e_splittings_and_quasi-F_regularity} (the pushout in their definition is transformed into a pullback by taking their functor $(-)^*$, giving precisely our definition).
%\end{srem}

\begin{scor}\label{quasi-stable_maps_into_quasi-stable}
	Let $X$ be a normal variety, and let $n, e \geq 1$. Then $q^eS^0_n(X, \omega_X) \neq 0$ if and only if $C^e_n \colon H^0(X, F^e_*W_n\omega_X) \to H^0(X, W_n\omega_X)$ is non-zero. 
\end{scor}
\begin{proof}
	Throughout, we will use \autoref{def_quasi_stable_sections} without further mention. The ``left-to-right'' direction is then clear, so let us assume that $C^e_n \colon H^0(X, W_n\omega_X) \to H^0(X, W_n\omega_X)$ is non-zero. Since $q^eS^0_j(X, \omega_X) \inc q^eS^0_{j + 1}(X, \omega_X)$ for any $j \geq 1$, we may assume that $n$ is minimal for the above property. If $n = 1$, there is nothing to do, so assume that $n > 1$. The proof follows from the commutativity of the following diagram, and the fact that the bottom row is exact:	
	\[ \begin{tikzcd}
		&                         & {H^0(X, F^e_*W_n\omega_X)} \arrow[d, "C^e_n"] \arrow[r]  & {H^0(X, F^{e + 1}_*W_{n - 1}\omega_X)} \arrow[d, "C^e_{n - 1} = 0"] \\
		0 \arrow[r]  & {H^0(X, \omega_X)} \arrow[r]  & {H^0(X, W_n\omega_X)} \arrow[r]                          & {H^0(X, F_*W_{n - 1}\omega_X)}      \qedhere                               
	\end{tikzcd} \]
\end{proof}

Let us now move to the proof of \autoref{main_thm_qFsplit_non_uniruled}. Our approach will be to show that a normal proper variety $Y$ endowed with a generically finite and dominant morphism to some $X$ such that $q^{\infty}S^0(X, \omega_X) \neq 0$ satisfies $H^0(Y, \omega_Y) \neq 0$, and hence cannot admit a birational morphism to a normal proper variety of the form $Z \times \bP^1$. We start with the purely inseparable case.

\begin{slem}\label{purely_inseparable_case}
	Let $f \colon Y \to X$ be a finite morphism of normal varieties such that $K(Y)^{p^f} \inc K(X)$ for some $f > 0$. If $q^eS^0_n(X, \omega_X) \neq 0$ for some $n \geq 1$ and $e \geq f$, then $q^{e - f}S^0_n(Y, \omega_Y) \neq 0$ (we set $q^0S^0_n(X, \omega_X) \coloneqq H^0(X, \omega_X)$). In particular, $q^{\infty}S^0(X, \omega_X) \neq 0$ if and only if $q^{\infty}S^0(Y, \omega_Y) \neq 0$.
\end{slem}
\begin{proof}
	By normality, there exists a factorization \[ \begin{tikzcd}
		X \arrow[r, "g"] \arrow[rr, "F^f"', bend right] & Y \arrow[r, "f"] & X.
	\end{tikzcd} \] The same then holds for the schemes $W_nX$ and $W_nY$, so there is a factorization
	\[ \begin{tikzcd}
		F^f_*W_n\omega_X \arrow[rr] &  & f_*W_n\omega_Y \arrow[rr] &  & W_n\omega_X
	\end{tikzcd} \] of $C^f_n \colon F^e_*W_n\omega_X \to W_n\omega_X$. In particular, we obtain a commutative diagram \[ \begin{tikzcd}
		F^e_*W_n\omega_X \arrow[r] \arrow[rr, "C^f_n", bend 	left = 20] & F^{e - f}_*f_*W_n\omega_Y \arrow[r] \arrow[d, "f_*C^{e - f}_n"'] & F^{e - f}_*W_n\omega_X \arrow[d, "C^{e - f}_n"] \\
		& f_*W_n\omega_Y \arrow[r]                                      & W_n\omega_X.                                    
	\end{tikzcd} \] If $\eta \in H^0(X, F^e_*W_n\omega_X)$ satisfies $C^e_n(\eta) \neq 0$, then its image $\psi$ in $H^0(X, F^{e - f}_*f_*W_n\omega_Y) = H^0(Y, F^{e - f}_*W_n\omega_Y)$ satisfies $C^{e - f}_n(\psi) \neq 0$, so the result follows from \autoref{quasi-stable_maps_into_quasi-stable}.
\end{proof}

This is already enough to prove the following generalization of \cite[Theorem 4.13]{Kawakami_Takamatsu_Yoshikawa_Fedder_type_criteria_for_quasi_F_splittings_II}:

\begin{scor}[{\autoref{intro_qF_split_implies_geom_normal}}]\label{qFsplit_implies_geom_normal}
	Let $X$ be a normal, proper, quasi-$F$-split variety with $K_X \equiv 0$. If $H^0(X, \cO_X) = K$, then $X$ is geometrically normal. 
\end{scor}
\begin{proof}
	By the proof of \cite[Proposition 3.14]{Quasi_F_splittings_I} and our assumption that $K_X \equiv 0$, there exists $n > 0$ such that $(p^n - 1)K_X \sim 0$. Let $m > 0$ be the smallest integer such that $mK_X \sim 0$ (note that $m$ is coprime to $p$), and let $f \colon Y \to X$ denote the $\mu_m$-cover associated to the torsion $\bZ$-divisor $K_X$. Since $f$ is quasi-étale and $X$ is quasi-$F$-split, we deduce by \cite[Proposition 3.24]{Quasi_F^e_splittings_and_quasi-F_regularity} that $Y$ is quasi-$F$-split on a big open subset, and therefore quasi-$F$-split by the $S_2$ property. Hence, $q^1S^0_n(Y, \cO_Y) \neq 0$ for some $n > 0$ which we fix. Since $\cO_Y \cong \omega_Y$, we deduce that $q^1S^0_n(Y, \omega_Y) \neq 0$ too. Note that since $X$ is $S_2$, it is enough to verify that $Y$ is geometrically $R_1$ (which can be checked at codimension one points), so it is enough to show that $Y$ is geometrically normal (note that the field extension $H^0(Y, \cO_Y) \cni H^0(X, \cO_X) = K$ is separable, so we may assume that $H^0(Y, \cO_Y) = K$). If we assume by contradiction that $Y$ is not geometrically normal, $Y_{K^{1/p}}$ is not normal by \cite[Proposition 2.10]{Tanaka_Invariants_of_algebraic_varieties_over_imperfect_fields}, so by \cite[Theorem 1.1]{Ji_Waldron_Structure_of_geometrically_non_reduced_varieties} we obtain that $H^0(Z, \omega_Z) = 0$ where $Z$ is the normalisation of $(Y_{K^{1/p}})_{\red}$. Since $Z \to Y$ is purely inseparable of height one, this contradicts \autoref{purely_inseparable_case}.
\end{proof}

\begin{slem}\label{pullback_by_separable_maps}
	Let $f \colon Y \to X$ be a generically finite, dominant and separable morphism of normal varieties, and assume that $X$ is regular. Then there exist canonical injections $q^eS^0_n(X, \omega_X) \inj q^eS^0_n(Y, \omega_Y)$ for all $e, n \geq 1$, and a canonical injection $q^{\infty}S^0(X, \omega_X) \inj q^{\infty}S^0(Y, \omega_Y)$.
\end{slem}
\begin{proof}
	Since this can be verified in a big open subset of $Y$, we may assume that $Y$ is regular. Let $r$ be the size of a $p$-basis locally on $X$, and let us explain how to identify $W_n\omega_X$ with $W_n\Omega_X^r$ (namely the $r$'th piece of the absolute de Rham-Witt complex of \cite{Illusie_Complexe_de_de_Rham_Witt_et_cohomologie_cristalline}) and similarly on $Y$. Let $\cX'$ be a smooth, separated scheme of finite type over the perfect field $k$, whose field of fractions is $K$. By spreading out, there exists a morphism $f \colon \cX \to \cX'$ with $\cX$ also smooth, separated and of finite type over $k$, such that the generic fiber of $f$ is $X$. Note that $r = \dim(\cX)$ (the localization of a $p$-basis is a $p$-basis), so we obtain by \cite[Theorem 4.1]{Ekedahl_Duality_Hodge_Witt} that there is a natural isomorphism $W_n\Omega_{\cX}^r \cong W_n\omega_{\cX}$, and it is explained in \cite[Remark 2.2.2]{Baudin_Witt_GR_vanishing_and_applications} why this isomorphism preserves all relevant trace maps (i.e. $W_{n - 1}\omega_{\cX} \inj W_n\omega_{\cX}$ and $C^n_e \colon F^e_*W_n\omega_{\cX} \to W_n\omega_{\cX}$). Localizing gives the desired result by \cite[Proposition I.1.11]{Illusie_Complexe_de_de_Rham_Witt_et_cohomologie_cristalline}. The same results hold for $Y$.
	
	Note that the usual pullback of Witt differential forms map $W_n\omega_X \cong W_n\Omega_X^r \to f_*W_n\Omega^r_Y \cong f_*W_n\omega_Y$ (see \cite[Equation I.1.12.2]{Illusie_Complexe_de_de_Rham_Witt_et_cohomologie_cristalline}) preserves the Cartier operator and the inclusions $W_{n -1}\omega \inj W_n\omega$ by \cite[Lemma 5.1.9]{Baudin_Witt_GR_vanishing_and_applications} and localizing. Hence, we obtain by \autoref{def_quasi_stable_sections} that $f^* \colon H^0(X, \omega_X) \cong H^0(X, \Omega_X^r) \to H^0(Y, \Omega_Y^r) \cong H^0(Y, \omega_Y)$ sends each $q^eS^0_n(X, \omega_X)$ into $q^eS^0_n(Y, \omega_Y)$. Since $f$ is separable and hence generically étale, these maps are injective. 
\end{proof}
\begin{srem}
	The isomorphism $W_n\omega_X \cong W_n\Omega_X^r$ with all the compatibilities in the proof above was the only reason why we asked that $K$ is finitely generated over a perfect field in this paper, as opposed to any $F$-finite field. In particular, if analogues of the results of \cite{Bhatt_Blickle_Schwede_Tucker_F-finite_schemes_have_a_dc} existed in the Witt setup, we would be able to get rid of this unnatural (although probably harmless) assumption.
\end{srem}

\begin{sprop}\label{main_prop_quasi_stable_sections_pull_back}
	Let $f \colon Y \to X$ be a generically finite and surjective morphism between normal proper varieties, with $X$ regular. If $q^{\infty}S^0(X, \omega_X) \neq 0$, then $q^{\infty}S^0(Y, \omega_Y) \neq 0$. In particular, $X$ is not geometrically uniruled.
\end{sprop}
\begin{proof}
	Write \[ \begin{tikzcd}
		Y \arrow[r, "\pi"] & Z \arrow[r, "g"] & Z' \arrow[r, "h"] & X,
	\end{tikzcd} \] where $\pi$ is birational, $g$ is finite purely inseparable and $h$ is finite separable. Let $e > 0$ be an integer such that $F^e \colon Z' \to Z'$ factors through $Z$. Consider the diagram 
	\[ \begin{tikzcd}
		Y \arrow[r, "\pi"]              & Z \arrow[r, "g"]                                & Z' \arrow[r, "h"]    & X, \\
		Y' \arrow[u] \arrow[r, "\pi'"'] & Z' \arrow[ru, "F^e"'] \arrow[u] \arrow[r, "h"'] & X \arrow[ru, "F^e"'] &  
	\end{tikzcd} \] where we set $Y'$ to be the normalization of $(Y \times_Z Z')_{\red}$ (in particular, $\pi'$ is birational). Since $h \circ \pi' \colon Y' \to X$ is separable and $X$ is regular, we obtain by \autoref{pullback_by_separable_maps} that $q^{\infty}S^0(Y', \omega_{Y'}) \neq 0$. Since $K(Y)^{p^e} \inc K(Y')$, we deduce by \autoref{purely_inseparable_case} that $q^\infty S^0(Y, \omega_Y) \neq 0$. The final assertion follows readily, since $H^0(\bP^1, \omega_{\bP^1}) = 0$.
\end{proof}

\begin{srem}
	\begin{itemize}
		\item This result can be extended in a straightforward way to any normal proper variety $X$ admitting a resolution $\pi \colon X' \to X$ such that $\pi_*W_n\omega_{X'} = W_n\omega_X$ for all $n \geq 1$ (see the proof of \autoref{cor_main_thm_regular_case}). This latter assumption is for example satisfied whenever $X$ has klt singularities by \cite[Lemma 3.8]{Baudin_Kawakami_Roesler_On_GR_for_klt_CM_schemes}, or quasi-$F$-rational singularities by the same argument as in the proof of  \cite[Theorem 5.2.4]{Baudin_Witt_GR_vanishing_and_applications} (see also \cite[Corollary 3.15]{Quasi_F_splittings_III}), as long as it admits a resolution.
		\item Note that this result shows more generally that if the inseparability part of $f \colon Y \to X$ (i.e. the map $g$ in the proof) factors through $F^e$ and $q^eS^0_n(X, \omega_X) \neq 0$ for some $n \geq 1$, then $H^0(Y, \omega_Y) \neq 0$. In particular, the quotient $Y$ of a regular quasi-$F$-split $K$-trivial variety by a $1$-foliation must satisfy $H^0(Y, \omega_Y) \neq 0$.
	\end{itemize}
\end{srem}

\begin{scor}[\autoref{main_thm_qFsplit_non_uniruled}]\label{cor_main_thm_regular_case}
	Let $X$ be a normal, proper and quasi-$F^{\infty}$-split variety with klt singularities such that $K_X \equiv 0$ and $H^0(X, \cO_X) = K$. Assume furthermore that the cyclic cover trivializing $K_X$ admits a resolution of singularities. Then $X$ has geometrically klt singularities. If $K_X$ is in addition Cartier, then $X$ is not geometrically uniruled. 
\end{scor}
\begin{srem}
	Recall from the proof of \autoref{qFsplit_implies_geom_normal} that $K_X$ is torsion of prime-to-$p$ order, so the associated finite cover is quasi-étale. Thus, if $X$ is regular to begin with, then this cover is étale and there is no further assumption about resolution of singularities.
\end{srem}
\begin{proof}
	Throughout, we will use the notations from the proof of \autoref{qFsplit_implies_geom_normal} (note that our argument in this proof and \cite[Proposition 3.24]{Quasi_F^e_splittings_and_quasi-F_regularity} show that $q^{\infty}S^0(Y, \omega_Y) \neq 0$). Let $\pi \colon Y' \to Y$ be a resolution of singularities, and let us show that $Y$ has geometrically canonical singularities and is not geometrically uniruled (without the assumption that $K_X$ is Cartier). Given that $Y$ has canonical singularities, we know by \cite[Lemma 3.8]{Baudin_Kawakami_Roesler_On_GR_for_klt_CM_schemes} that $\pi_*W_n\omega_{Y'} = W_n\omega_Y$ for all $n \geq 1$, so it follows from \autoref{def_quasi_stable_sections} that $q^{\infty}S^0(Y', \omega_{Y'}) \neq 0$. We then deduce from \autoref{main_prop_quasi_stable_sections_pull_back} that $Y'$ (and hence $Y$) is geometrically normal and not geometrically uniruled. 
	
	Assume now by contradiction that the singularities of $Y$ are not geometrically canonical. Then there exists $e > 0$ and a projective birational morphism $\theta \colon Z \to Y_{k^{1/p^e}}$ from a normal variety $Z$ such that the inclusion $\theta_*\omega_Z \to \omega_{Y_{k^{1/p^e}}}$ is strict, since $K_{Y_{k^{1/p^e}}}$ is Cartier (we are using \autoref{BPRZ}). Given that $\omega_{Y_{k^{1/p^e}}} \cong \cO_{Y_{k^{1/p^e}}}$, we deduce that $H^0(Z, \omega_Z) = 0$. Let $Z' \to Z$ be a further blowup with $Z'$ normal such that the induced map $Z' \to Y$ factors through $Y'$. Then $H^0(Z', \omega_{Z'}) \inc H^0(Z, \omega_Z) = 0$, which contradicts \autoref{main_prop_quasi_stable_sections_pull_back} since $q^{\infty}S^0(Y', \omega_{Y'}) \neq 0$. Thus, we have proven that $Y$ has geometrically canonical singularities, and is not geometrically uniruled.
	
	By \cite[Proposition 2.50]{Kollar_Singularities_of_the_MMP}, we automatically deduce that $X$ has geometrically klt singularities. If in addition $K_X$ is Cartier, then the map $f \colon Y \to X$ is étale, so we obtain that also $X$ is not geometrically uniruled. \qedhere

\end{proof}
%
%Note that the same proof also shows the klt version:
%
%\begin{scor}\label{cor_main_thm_singular_version}
%	Let $X$ be a normal, proper and quasi-$F^e$-split variety with klt singularities such that $K_X \equiv 0$. Then $X$ is geometrically normal. Furthermore, if the cyclic cover trivializing $K_X$ admits a resolution of singularities, then also $X_{k^{1/p^e}}$ has klt singularities. In particular, $X$ is geometrically klt if it is quasi-$F^{\infty}$-split.
%\end{scor}
%\begin{proof}
%	As in the proof of \autoref{cor_main_thm_regular_case}, we know that $K_X$ is $\bZ_{(p)}$-torsion, so let $f \colon Y \to X$ be the associated quasi-étale cover. To verify that $X$ is geometrically normal, it suffices to show that that it is geometrically $R_1$, which can be checked at codimension one points by definition. Hence, it is enough to show that $Y$ is geometrically normal. Since $Y$ is quasi-$F^e$-split in codimension $1$ by \cite[Proposition 3.24]{Quasi_F^e_splittings_and_quasi-F_regularity} \JB{bababaaaa}
%\end{proof}

\section{A $K$-trivial surface which is quasi-$F$-split but not quasi-$F^{\infty}$-split}

Our objective in this section is to prove \autoref{intro_example}. Even though all the results beforehand will not be necessary, we include them here because we believe that they give a good idea of the general picture.

\begin{slem}\label{partial_positive_answer}
	Let $X$ be a $d$-dimensional normal proper variety, with $\omega_X \cong \cO_X$. Then the following holds:
	\begin{enumerate}
		\item\label{first_item} if the natural surjection $H^d(X, W\cO_X) \to H^d(X, \cO_X)$ is an isomorphism, then either the Frobenius action on $H^d(X, \cO_X)$ is non-zero (equivalently $X$ is globally $F$-split), or $X$ is not quasi-$F$-split;
		\item\label{second_item} if $H^d(X, W\cO_X) \otimes \bQ \neq 0$, then $X$ is quasi-$F^{\infty}$-split;
		\item\label{third_item} if $X$ is quasi-$F^{\infty}$-split, not globally $F$-split and $H^0(X, \cO_X)$ is a perfect field, then $H^d(X, W\cO_X) \otimes \bQ \neq 0$.
	\end{enumerate}
\end{slem}
\begin{proof}
	We first prove \autoref{first_item}, so let us assume that $X$ is quasi-$F$-split and let us show that the Frobenius action on $H^d(X, \cO_X)$ is non-zero. Since $X$ is quasi-$F$-split, we deduce by \autoref{quasi-stable_maps_into_quasi-stable} and Serre duality that for some $n > 0$, the map $H^d(X, W_n\cO_X) \to H^d(X, F_*W_n\cO_X)$ is non-zero. Note that the assumption forces that $H^d(X, W_n\cO_X) \to H^d(X, \cO_X)$ is an isomorphism, so we deduce the result. Let us now prove \autoref{second_item}, so fix $e > 0$. Since the action of $p^e$ on $H^d(X, W\cO_X)$ is non-zero by assumption and the fact that $p = FV$, we deduce that $F^e \colon H^d(X, W\cO_X) \to H^d(X, F^e_*W\cO_X)$ is also non-zero. Given that $H^d(X, W\cO_X) = \lim H^d(X, W_n\cO_X)$ by the Mittag--Leffler condition, we deduce that there exists $n \geq 1$ such that $F^e \colon H^d(X, W_n\cO_X) \to H^d(X, F^e_*W_n\cO_X)$ is non-zero. We therefore deduce by \autoref{quasi-stable_maps_into_quasi-stable} that $X$ is $n$-quasi-$F^e$-split. Finally, let us prove \autoref{third_item}. Since $X$ is in particular quasi-$F$-split and is defined over a perfect field $k_0$, we deduce by \cite[Theorem 2.7.(2)]{Yobuko_Quasi_F_split_and_Hodge_Witt} (see also \cite{Nakkajima_Artin_Mazur_heights_and_Yobuko_heights}) that $M \coloneqq H^d(X, W\cO_X)$ is a finitely generated $W(k_0)$-module. Assume by contradiction that $p^sM = 0$ for some $s > 0$. Then $M$ has finite length, so the inverse system $\{H^d(X, W_n\cO_X)\}_{n \geq 1}$ is eventually constant. By the same argument as above, $F^e \colon M \to F^e_*M$ is not zero for any $e > 0$, so we deduce from the above that there exists $n > 0$ such that $F^e \colon H^d(X, W_n\cO_X) \to H^d(X, F^e_*W_n\cO_X)$ is non-zero for any $e > 0$. Using the exact sequences \[ \begin{tikzcd}
		{H^d(X, F_*W_{n - 1}\cO_X)} \arrow[r, "V"] & {H^d(X, W_n\cO_X)} \arrow[r, "R^{n - 1}"] & {H^d(X, \cO_X)} \arrow[r] & 0,
	\end{tikzcd} \] we deduce that the same property holds on either $H^d(X, W_{n - 1}\cO_X)$ or $H^d(X, \cO_X)$. We can keep going inductively to deduce that the Frobenius action on $H^d(X, \cO_X)$ is non-zero, i.e. $X$ is globally $F$-split. 
\end{proof}

\begin{scor}\label{no_issue_if_bocktein_zero}
	Let $X$ be a normal proper variety such that $\omega_X \cong \cO_X$ and $H^0(X, \cO_X) = k_0$ is a perfect field. If all the sequences \[ \begin{tikzcd}
		{H^d(X, F_*W_{n - 1}\cO_X)} \arrow[r, "V"] & {H^d(X, W_n\cO_X)} \arrow[r, "R^{n - 1}"] & {H^d(X, \cO_X)} \arrow[r] & 0,
	\end{tikzcd} \] are also exact on the left (e.g. if $H^{d - 1}(X, \cO_X) = 0$) and $X$ is quasi-$F$-split, then it is quasi-$F^{\infty}$-split.
\end{scor}
\begin{proof}
	Since $X$ is quasi-$F$-split, we know by \cite[Theorem 2.7.(2)]{Yobuko_Quasi_F_split_and_Hodge_Witt} that $H^d(X, W\cO_X)$ is finitely generated over $W(k_0)$. Given that $H^d(X, \cO_X) \cong k_0$, we deduce by the assumption that $\length_{W(k_0)}(H^d(X, W_n\cO_X)) = n$ for all $n \geq 1$, so $H^d(X, W\cO_X)$ cannot have finite length. We conclude the proof by \autoref{partial_positive_answer}.\autoref{second_item}.
\end{proof}

\begin{slem}\label{crystalline_cohomology_criterion}
	Let $X$ be a $d$-dimensional quasi-$F$-split smooth proper variety such that $\omega_X \cong \cO_X$ and $H^0(X, \cO_X) = k_0$ is a perfect field. If both groups $H^d_{\crys}(X/W)$ and $H^{d + 1}_{\crys}(X/W)$ are $p$-torsion-free, then $H^d(X, W\cO_X) \otimes \bQ \neq 0$, and hence $X$ is quasi-$F^{\infty}$-split.
\end{slem}
\begin{proof}
	The last conclusion follows from \autoref{partial_positive_answer}.\autoref{second_item}, so we only need to show that $H^d(X, W\cO_X) \otimes \bQ = 0$. Throughout, we will use the notations of \cite{Ekedahl_Diagonal_complexes_and_F_gauge_structures}, which we partially review now. Fix $i, j \geq 0$. We set $h^{i, j} \coloneqq \dim H^j(X, \Omega_X^i)$. Given $\lambda \in \bQ_{\geq 0}$, we set $h^{i + j}_{\crys, \lambda}$ to be the dimension of the slope $\lambda$ part of the $F$-isocrystal $H^{i + j}_{\crys}(X/W) \otimes \bQ$, and we set \[ m^{i, j} \coloneqq \sum_{\lambda \in [i, i + 1[}(i + 1 - \lambda)h^{i + j}_{\crys, \lambda} + \sum_{\lambda \in [i - 1, i[} (\lambda - i + 1) h^{i + j}_{\crys, \lambda}. \] We also let $T^{i, j}$ the number defined at the beginning of \cite[Section 0.6]{Ekedahl_Diagonal_complexes_and_F_gauge_structures} (whose purpose is to compute how much the slope spectral sequence fails to degenerate integrally) and we set \[ h^{i, j}_W \coloneqq m^{i, j} + T^{i, j} -2T^{i - 1, j + 1} + T^{i - 2, j + 2}. \] In particular, we have \[ h^{0, d}_W = \sum_{\lambda \in [0, 1[}(1 - \lambda)h^d_{\crys, \lambda} + T^{0, d}. \] Given that $H^d(X, W\cO_X)$ is finitely generated by the quasi-$F$-split assumption and \cite[Theorem 2.7.(2)]{Yobuko_Quasi_F_split_and_Hodge_Witt}, we deduce by the definition of $T^{0, d}$ and \cite[Corollaire II.3.9]{Illusie_Raynaud_Les_suites_spectrales_associees_au_cox_de_de_Rham_Witt} that $T^{0, d} = 0$ (in fact, $T^{0, j} = 0$ for all $j$). If we further assume by contradiction that $H^d(X, W\cO_X) \otimes \bQ = 0$, then it means by \cite[Corollaire II.3.5]{Illusie_Complexe_de_de_Rham_Witt_et_cohomologie_cristalline} that $H^d_{\crys}(X/W) \otimes \bQ$ has no slope of degree $< 1$, so we have that $h^{0, d}_W = 0$. 
	
	We will now show that $h^{0, d}_W = 1$, giving a contradiction. Given that $H \coloneqq \bigoplus_s H^s_{\crys}(X/W)$ is a coherent module over the Raynaud ring \cite[Théorème II.2.2]{Illusie_Raynaud_Les_suites_spectrales_associees_au_cox_de_de_Rham_Witt} and \[ H^d_{\crys}(X/W)/p \expl{\cong}{by the $p$-torsion-freeness assumptions and \cite[Equation II.4.9.1]{Illusie_Complexe_de_de_Rham_Witt_et_cohomologie_cristalline}} H^d_{\dR}(X/k) \cong \bigoplus_{a + b = d} H^b(X, \Omega_X^a), \] (the second isomorphism follows from the quasi-$F$-split assumption by \cite[Theorem 1.1]{Petrov_Decomposition_of_the_dR_cpx_for_qFsplit_varieties}), it follows that $H$ is a Mazur-Ogus object in degree $d$ (see \cite[Definition IV.1.1]{Ekedahl_Diagonal_complexes_and_F_gauge_structures}). By \cite[Corollary IV.3.3.1]{Ekedahl_Diagonal_complexes_and_F_gauge_structures}, we deduce in particular that \[ h^{0, d}_W = h^{0, d} = 1. \qedhere \]
\end{proof}

Here is our main result in this section, answering negatively the question raised in \cite{Quasi_F_splittings_III} below their Theorem H.

\begin{sthm}[{\autoref{intro_example}}]
	Let $S$ be a quasi-$F$-split, smooth projective surface over an algebraically closed field $k$ such that $\omega_S \cong \cO_S$. Then $S$ is quasi-$F^{\infty}$-split, unless $p = 2$ and $S$ is given by the diagonal quotient \[ (E_1 \times E_2)/\bZ/2\bZ, \] where: 
	\begin{enumerate}
		\item $E_1$ is an ordinary elliptic curve acted on by its unique non-zero point of $2$-torsion;
		\item $E_2$ is a supersingular elliptic curve acted on by the sign involution.
	\end{enumerate} 
	In particular, there exists a log canonical threefold singularity $X$ with $K_X$ Cartier that is quasi-$F$-pure but not quasi-$F^{\infty}$-pure.
\end{sthm}
\begin{proof}
	If $S$ is either a K3 surface of an abelian surface, then $S$ is automatically quasi-$F^{\infty}$-split by either \autoref{no_issue_if_bocktein_zero} (see also \cite[Théorème 2]{Serre_Quelques_proprietes_des_varietes_abeliennes_en_car_p} in the case of an abelian surface), or by \autoref{crystalline_cohomology_criterion} and \cite[Sections II.7.1 and II.7.2]{Illusie_Complexe_de_de_Rham_Witt_et_cohomologie_cristalline}, or by \cite[Corollaries 7.2 and 7.3]{Quasi_F^e_splittings_and_quasi-F_regularity}. If $S$ is an Enriques surface, then automatically $p = 2$ and have that $\length(H^2(X, W\cO_S)) = 1$ by \cite[Proposition 7.3.2]{Illusie_Complexe_de_de_Rham_Witt_et_cohomologie_cristalline}, so $S$ is in fact globally $F$-split by \autoref{partial_positive_answer}.\autoref{first_item} (and hence quasi-$F^{\infty}$-split). Note that $S$ cannot be quasi-bielliptic by \autoref{intro_qF_split_implies_geom_normal}. Finally, assume that $S$ is a bielliptic surface, and hence $p \in \{2, 3\}$. By \cite[Proposition E.4.3 and the paragraph after its proof]{Lang_Quasi_elliptic_surfaces_in_characteristic_three}, we have that all bielliptic surfaces with trivial canonical bundle apart from the one of the theorem satisfy that $\length(H^2(S, W\cO_S)) = 1$, so we conclude as in the case of Enriques surfaces. We are therefore left to verify that the bielliptic surface from the theorem is quasi-$F$-split but not quasi-$F^2$-split. First, note that $S$ is quasi-$F$-split but not globally $F$-split by \cite[Theorem 4.25]{Kawakami_Takamatsu_Yoshikawa_Fedder_type_criteria_for_quasi_F_splittings_II}. We also know by \cite[Paragraph after the proof of Proposition 4.3]{Lang_Quasi_elliptic_surfaces_in_characteristic_three} that the first Bockstein operator of $S$ is zero, and that $H^2(S, W\cO_S)$ has length $2$. This fact on the first Bockstein operator means that the sequence \[ \begin{tikzcd}
		0 \arrow[r]  & {H^2(S, F_*\cO_S)} \arrow[rr, "V"] &  & {H^2(S, W_2\cO_S)} \arrow[rr, "R"] &  & {H^2(S, \cO_S)} \arrow[r] & 0
	\end{tikzcd} \] is exact. Since $S$ is not globally $F$-split (its abelian cover contains a supersingular elliptic curve), the Frobenius action on the left and right term are zero, so the Frobenius action on the middle term is nilpotent of order $2$. On the other hand, since $H^2(S, W\cO_S) \to H^2(S, W_2\cO_S)$ is a surjection of modules of length $2$, it must be an isomorphism. We then conclude by \autoref{quasi-stable_maps_into_quasi-stable} and Serre duality that $S$ is not quasi-$F^2$-split. The statement after ``In particular'' follows from taking cones by \cite[Theorem 6.5]{Quasi_F_splittings_III}.
\end{proof}

\section{Quasi-$F$-purity and log canonicity}

Our objective here is to prove \autoref{intro_qFsplit_implies_lc}. The following analogue of \cite[Theorem 3.2]{Baudin_Kawakami_Roesler_On_GR_for_klt_CM_schemes} (see also \cite[Theorem 3.7]{Kawakami_Local_vanishing_for_F_pure_threefolds}) will be essential to our approach.

\begin{sprop}\label{Tatsuro_vanishing}
	Let $X$ be a normal variety, and let $\Delta \geq 0$ be a divisor such that $K_X + \Delta$ is $\bQ$--Cartier. Let $\pi \colon Y \to X$ be a projective birational morphism with $Y$ regular, and write \[ K_Y + \pi^{-1}_*\Delta \sim_{\bQ} \pi^*(K_X + \Delta) + E_+ - E_-, \] with $E_+$ and $E_-$ effective and $\pi$-exceptional. Then for all $j \geq 0$, \[ \codim \Supp(R^i\pi_*\cO_Y(-\lfloor E_- \rfloor)) > i + 1. \]
\end{sprop}
\begin{proof}
	The proof will be almost identical to that of \cite[Theorem 3.2]{Baudin_Kawakami_Roesler_On_GR_for_klt_CM_schemes}. Let $d \coloneqq \dim(X)$, and let us show the result by induction on $d$. It follows by localizing at points of $X$ and using the induction hypothesis that it is enough to show that \[ R^{d - 1}\pi_*\cO_Y(-\lfloor E_- \rfloor) = 0.\] 
	By \cite[Theorem II.7.17]{Hartshorne_Algebraic_Geometry}, there exists a closed subscheme $Z\subseteq X$ such that $\pi$ is the blow-up of $X$ along $Z$. Let us denote $H \coloneqq \pi^{-1}(Z)$, so that $\cO_{Y}(-H)$ is $\pi$-ample by \stacksprojs{02NS}{02OS}. We can take $n\gg 0$ such that $R^{d-1}\pi_*\cO_Y(-nH - \lfloor E_- \rfloor)=0$ by relative Serre vanishing. Let us write
	\begin{align*}
		nH= \sum_{i\in I}r_iE_i \: + \: G,
	\end{align*}
	for some positive integers $r_i>0$, where the $E_i$'s are exactly the $\pi$-exceptional irreducible divisors. Consider the short exact sequence
	\begin{align*}
		0\to\cO_Y(-nH - \lfloor E_- \rfloor)\to\cO_Y(-\sum_{i \in I}r_i E_i  -  \lfloor E_- \rfloor)\to\cO_G(-\sum_ir_iE_i  -  \lfloor E_- \rfloor)\to 0.
	\end{align*}
	As $R^{d-1}\pi_*\cO_Y(-nH - \lfloor E_- \rfloor)=0$ and $R^{d-1}\pi_*\cO_G(-\sum_i r_iE_i - \lfloor E_- \rfloor)=0$ (the fibers of $G \to \pi(G)$ have dimension $\leq d - 2$), we deduce that $R^{d - 1}\pi_*\cO_Y(-\sum_i r_iE_i - \lfloor E_- \rfloor) = 0$. To conclude the proof, we are then left to show the following:
	\begin{claim*}
		If $R^{d-1}\pi_{*}\cO_Y(-\sum_{i\in I} n_iE_i - \lfloor E_- \rfloor)=0$ for some $(n_i)_{i\in I}\in\bZ_{\geq 0}$ satisfying $\sum_{i\in I} n_i\geq 1$, then there exists $j\in I$ such that $n_j\geq 1$ and \[R^{d-1}\pi_{*}\cO_Y\left(-(n_j-1)E_j-\sum_{i\in I\setminus\{j\}} n_iE_i - \lfloor E_- \rfloor\right)=0.\] 
	\end{claim*}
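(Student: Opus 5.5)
The plan is to peel off one component of $N \coloneqq \sum_{i \in I} n_iE_i$ via the restriction sequence
\[ 0 \to \cO_Y(-N - \lfloor E_- \rfloor) \to \cO_Y(-N + E_j - \lfloor E_- \rfloor) \to \cO_{E_j}(-N + E_j - \lfloor E_- \rfloor) \to 0, \]
for a suitable $j$ with $n_j \geq 1$. Taking $R\pi_*$ and using the hypothesis $R^{d-1}\pi_*\cO_Y(-N - \lfloor E_- \rfloor) = 0$, it suffices to find $j$ such that
\[ R^{d-1}\pi_*\cO_{E_j}(-N + E_j - \lfloor E_- \rfloor) = 0. \]
We may localize at the closed point we are working over, as in the reduction above. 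If $\dim \pi(E_j) \geq 1$, the fibers of $E_j \to \pi(E_j)$ have dimension $\leq d-2$, and the vanishing is automatic, exactly as for $G$ above. So the only problematic components are those lying over the closed point $x$. For such $E_j$, the vanishing amounts to $H^{d-1}(E_j, L) = 0$ with $L \coloneqq \cO_{E_j}(-N + E_j - \lfloor E_- \rfloor)$. Here $E_j$ is a projective $(d-1)$-dimensional scheme over the residue field.

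Since $Y$ is regular, $E_j$ is a Cartier divisor on a regular scheme, hence Gorenstein, and $\omega_{E_j} \cong \cO_{E_j}(K_Y + E_j)$ by adjunction. Serre duality then turns the vanishing into
\[ H^0\bigl(E_j, \cO_{E_j}(K_Y + N + \lfloor E_- \rfloor)\bigr) = 0. \]
Using the hypothesis $K_Y + \pi^{-1}_*\Delta \sim_{\bQ} \pi^*(K_X + \Delta) + E_+ - E_-$, the divisor $K_Y + N + \lfloor E_- \rfloor$ is $\bQ$-linearly equivalent to
\[ \pi^*(K_X + \Delta) + N + E_+ - \{E_-\} - \pi^{-1}_*\Delta. \]
Restricted to $E_j$, which lies over a point, the pullback term is numerically trivial. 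Hence it is enough to show that $(N + E_+ - \{E_-\} - \pi^{-1}_*\Delta)|_{E_j}$ has negative degree on a family of curves covering $E_j$. A nonzero section of a line bundle whose restriction to a covering family of curves has negative degree cannot exist; passing to a multiple handles the $\bQ$-coefficients. The terms $-\{E_-\}$ and $-\pi^{-1}_*\Delta$ are harmless as long as $E_j$ is not a component of them. Indeed, $-\pi^{-1}_*\Delta$ only decreases degrees on curves not contained in its support, and $\{E_-\}$ has coefficients in $[0,1)$, which I expect to be absorbed by a strict inequality.

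The main obstacle is the choice of $j$, which requires a negativity-lemma argument. I would apply the negativity lemma \cite{Kollar_Singularities_of_the_MMP} to the $\pi$-exceptional divisor $D \coloneqq N + E_+$. It is effective and nonzero, so it is not $\pi$-nef over the generic point of any component of its support. Concretely, cutting $Y$ by $d-2$ general hyperplanes through the fiber over $x$ reduces us to a surface, where the intersection matrix of exceptional curves is negative definite. This yields a component $E_j$ of $\Supp D$ over $x$ on which $D$ is negative on a covering family of curves. Among such components I would choose $j$ to maximize the ratio $n_i / (\text{coefficient of } E_i \text{ in a fixed } \pi\text{-anti-ample exceptional divisor})$, as in \cite[Theorem 3.2]{Baudin_Kawakami_Roesler_On_GR_for_klt_CM_schemes}. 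The hope is that this forces $E_j \subseteq \Supp N$, i.e. $n_j \geq 1$, and gives $(N + E_+ - \{E_-\})|_{E_j}$ negative on the covering curves.

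The delicate point is that positivity contributed by $E_+$ must not push the chosen component outside $\Supp N$. I would first try to handle this by noting that $E_+$ and $E_-$ have no common components, and that components of $E_+$ not in $\Supp N$ only add effective divisors meeting $E_j$ properly. If this fails, I would instead run the ratio argument on $N + \lceil E_+ \rceil$ and adjust the statement of the claim accordingly.
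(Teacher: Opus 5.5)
Up to the choice of $j$, your argument is the paper's. That covers the restriction sequence, the case $\dim\pi(E_j)>0$, Serre duality on $E_j$, and $K_Y+N+\lfloor E_-\rfloor\sim_{\bQ,\pi}N+E_+-\{E_-\}-\pi^{-1}_*\Delta$. The gap is the choice of $j$, which is the real content of the claim, and you do not supply a working one. Maximizing $n_i/r_i$ guarantees $n_j\geq 1$. But the max-ratio trick only gives bigness of $-D|_{E_j}$, with $D=N+E_+-\{E_-\}$, if $tH$ dominates the \emph{entire} positive part of $D$ with equality along $E_j$. With your ratio this fails in two ways:
\begin{itemize}
\item components of $E_+$ with $n_i=0$ are not dominated by $tH$;
\item if $E_j\subseteq\Supp\{E_-\}$, the equality along $E_j$ fails. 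So $-\{E_-\}$ is not harmless: it then contains a positive multiple of $-E_j|_{E_j}$.
\end{itemize}
Your proposed remedy goes the wrong way. A component of $E_+$ other than $E_j$ restricts to an \emph{effective} divisor on $E_j$. That raises the degrees of $(N+E_+-\{E_-\})|_{E_j}$ on covering curves, so it is exactly the obstruction, not a harmless term. Only $-\pi^{-1}_*\Delta$ and the parts of $-\{E_-\}$ away from $E_j$ help. Passing to $N+\lceil E_+\rceil$ and changing the statement does not prove the claim.

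The paper instead encodes both $\{E_-\}$ and $E_+$ in the coefficients $a_i<1$ of $\{E_-\}-E_+$. It sets $J=\{i : n_i-a_i>0\}$ and takes $j\in J$ maximizing $(n_i-a_i)/r_i$. Then $t(G+\sum_i r_iE_i)-\sum_{i\in J}(n_i-a_i)E_i$ is effective with $E_j$ not in its support, which gives the bigness; the terms with $i\notin J$ and $\pi^{-1}_*\Delta$ are effective on $E_j$.

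Your worry about $n_j\geq 1$ is justified even for this choice. Every component of $E_+$ lies in $J$, also when $n_i=0$, and the maximum can be attained only there. For example, blow up a general point of $C_1$ on the minimal resolution $C_1\cup\dots\cup C_4$ of an $A_4$ singularity, and call the new curve $F$, so $E_+=F$ and $E_-=0$. Take $H=4C_1+6C_2+6C_3+4C_4+5F$ (strict transforms); then $-H$ is $\pi$-ample, and up to a multiple such an $H$ is $\pi^{-1}(Z)$ for a suitable presentation of $\pi$ as a blow-up. For $N=C_2$ one gets $J=\{C_2,F\}$ with ratios $1/6<1/5$, so $j=F$ although $n_F=0$. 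The claim itself still holds here, with $j=C_2\cong\bP^1$. Hence, when $E_+\neq 0$, an extra argument is needed to produce $j$ with $n_j\geq 1$. The paper's text takes this for granted, and your proposal does not provide it either.
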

	\noindent \textit{Proof of the claim.}
	For now, fix $j \in I$ with $n_j \geq 1$. We will pick a specific $j$ later. Consider the short exact sequence
	\begin{multline*}
		0\to \cO_Y\left(-\sum_{i\in I} n_iE_i - \lfloor E_- \rfloor\right) \to  \cO_Y\left(-(n_j-1)E_j-\sum_{i\in I\setminus\{j\}} n_iE_i - \lfloor E_- \rfloor\right)\\\to \cO_{E_j}\left(E_j-\sum_{i\in I} n_iE_i - \lfloor E_- \rfloor\right)\to 0. 
	\end{multline*}
	We aim to show that \[R^{d-1}\pi_{*}\cO_Y\left(-(n_j-1)E_j-\sum_{i\in I\setminus\{j\}} n_iE_i - \lfloor E_- \rfloor\right)=0,\] which is equivalent to  \[R^{d-1}\pi_{*}\cO_{E_j}\left(E_j-\sum_{i\in I} n_iE_i  - \lfloor E_- \rfloor\right)=0.\]
	If $\dim \pi(E_j)>0$, then this is immediate since then the fibers of $E_j \to \pi(E_j)$ have dimension $\leq d - 2$. If $\dim \pi(E_j)=0$, then
	\begin{align*}
		R^{d-1}\pi_{*}\cO_{E_j}\left(E_j-\sum_{i\in I} n_iE_i - \lfloor E_- \rfloor\right)&=H^{d-1}\left(E_j, \cO_{E_j}\left(E_j-\sum_{i\in I} n_iE_i - \lfloor E_- \rfloor\right)\right)\\
		&\cong H^0\left(E_j, \cO_{E_j}\left(K_{E_j}-E_j+\sum_{i\in I} n_iE_i + \lfloor E_- \rfloor\right)\right)^{\vee} \\
		&\cong H^0\left(E_j, \cO_{E_j}\left(K_{Y}+\sum_{i\in I} n_iE_i + \lfloor E_- \rfloor\right)\right)^{\vee}.
	\end{align*}
	To conclude that the latter group vanishes, it is then enough to show that $-(K_{Y}+\sum_{i\in I} n_iE_i + \lfloor E_- \rfloor)|_{E_j}$ is $\pi|_{E_j}$-big. Let us now find some $j \in I$ achieving this. Write \[ K_Y + \pi^{-1}_*\Delta + \sum_{i \in I}a_iE_i +  \lfloor E_- \rfloor \sim_{\bQ} \pi^*(K_X + \Delta), \] so that $a_i \in \bQ_{< 1}$ for all $i \in I$ by construction (note that $\bigcup_{i \in I}E_i = \Supp(\sum_i r_iE_i)$ coincides with the codimension one part of exceptional locus). Let $J\coloneqq \{i\in I \mid n_i-a_i> 0\}\subset I$.
	Note that $J\neq \emptyset$ since $\sum_{i\in I} n_i\geq 1$, while $a_i<1$ for all $i \in I$. Set
	\begin{align*}
		t\coloneqq\underset{i\in J}{\max}\left\{\frac{n_i-a_i}{r_i}\right\}\in\bQ_{>0},
	\end{align*}
	and let $j\in J$ be an index where the maximum is attained. By construction, we have that
	\[E_j\not\subset \Supp\left(t\left(\sum_{i\in I} r_iE_i\right)-\sum_{i\in J} (n_i-a_i)E_i\right),\]
	so 
	\[
	-\left.\left(\sum_{i\in J} (n_i-a_i )E_i\right)\right|_{E_j}=\left.\left(-tG-t\sum_{i\in I} r_iE_i\right)\right|_{E_j}+\left.\left(tG+t\sum_{i\in I} r_iE_i-\sum_{i\in J} (n_i-a_i)E_i\right)\right|_{E_j}
	\]
	is $\pi|_{E_j}$-big (recall that $-G-\sum_{i\in I} r_iE_i$ is $\pi$-ample), whence
	\begin{align*}
		-\left.\left(K_{Y}+\sum_{i\in I} n_iE_i + \lfloor E_- \rfloor \right)\right|_{E_j} & \sim_{\bQ,\pi|_{E_j}} -\left.\left(\sum_{i\in I} (n_i-a_i)E_i\right)\right|_{E_j}+\pi^{-1}_{*}\Delta|_{E_j}\\
		&= -\left.\left(\sum_{i\in J} (n_i-a_i )E_i\right)\right|_{E_j}+\left.\left(\sum_{i\in I\setminus J} (a_i-n_i )E_i\right)\right|_{E_j}+\pi^{-1}_{*}\Delta|_{E_j}
	\end{align*}
	is also $\pi|_{E_j}$-big.
\end{proof}

\begin{scor}[{\cite[Lemma 3.8]{Baudin_Kawakami_Roesler_On_GR_for_klt_CM_schemes}}]\label{pushforward_of_witt_is_what_we_think}
	Notations as in \autoref{Tatsuro_vanishing}. Then for all $n \geq 1$, we have \[ \pi_*W_n\omega_{(Y, \lfloor E_- \rfloor)} = W_n\omega_X. \]
\end{scor}
\begin{proof}
	Consider the exact triangle \[ \begin{tikzcd}
		\pi_*W_n\cI_{\lfloor E_- \rfloor} \arrow[rr] &  & R\pi_*W_n\cI_{\lfloor E_- \rfloor} \arrow[rr] &  & \tau_{\geq 1}(R\pi_*W_n\cI_{\lfloor E_- \rfloor}) \arrow[rr, "+1"] &  & {}
	\end{tikzcd} \] By \autoref{Tatsuro_vanishing} and the usual exact sequences \[ \begin{tikzcd}
	0 \arrow[r]   & F_*W_{n - 1}\cI_{\lfloor E_- \rfloor} \arrow[r, "V"] & W_n\cI_{\lfloor E_- \rfloor} \arrow[r, "R^{n - 1}"] & \cO_Y(-\lfloor E_- \rfloor) \arrow[r] & 0,
	\end{tikzcd} \] we obtain that $\codim \Supp(R^i\pi_*W_n\cI_{\lfloor E_- \rfloor}) > i + 1$ for all $i$. Hence, if we set $\bD(-) \coloneqq \cR\HHom(-, W_n\omega_X^{\bullet})$, we obtain by the same argument as in the proof of \cite[Lemma 3.4]{Baudin_Kawakami_Roesler_On_GR_for_klt_CM_schemes} that \[ \cH^{-(d - 1)}(\bD(\tau_{\geq 1}(R\pi_*W_n\cI_{\lfloor E_- \rfloor}))) = 0. \] Thus, we obtain that 
	\begin{align*} 
		\HHom(\pi_*W_n\cI_{\lfloor E_- \rfloor}, W_n\omega_X) &= \cH^{-d}(\bD(\pi_*W_n\cI_{\lfloor E_- \rfloor})) \\
		& = \cH^{-d}(\bD(R\pi_*W_n\cI_{\lfloor E_- \rfloor})) \\
		&= \cH^{-d}(R\pi_*W_n\omega_{(Y, \lfloor E_- \rfloor)}^{\bullet}) \\
		& = \pi_*W_n\omega_{(Y, \lfloor E_- \rfloor)}.
	\end{align*}
	Finally, note that the natural map $W_n\omega_X \to \HHom(\pi_*W_n\cI_{\lfloor E_- \rfloor}, W_n\omega_X)$ is an isomorphism on a big open subset. Since $W_n\omega_X$ is $S_2$ (\stacksproj{0AWE}) and the restriction map of sections on $\HHom(\pi_*W_n\cI_{\lfloor E_- \rfloor}, W_n\omega_X)$ is injective, this map is an isomorphism.
\end{proof}

Before stating our theorem about log canonicity, we will also need the following:

\begin{slem}\label{pushout_sucks_but_pullback_vibes}
	Let $0 \neq \cI \inc \cJ$ be the inclusion of quasi-coherent ideals on a variety $X$, Then the commutative diagram \[ \begin{tikzcd}
		{\HHom(W_n\cJ, W_n\omega_X)} \arrow[r]   & {\HHom(W_n\cI, W_n\omega_X)}     \\
		{\HHom(\cJ, W_n\omega_X)} \arrow[u] \arrow[r]  & {\HHom(\cI, W_n\omega_X)} \arrow[u]
	\end{tikzcd}  \] is a pullback square. Furthermore, the natural maps $\HHom(\cJ, \omega_X) \to \HHom(\cJ, W_n\omega_X)$ and $\HHom(\cI, \omega_X) \to \HHom(\cI, W_n\omega_X)$ are isomorphisms.
\end{slem}
\begin{proof}
	Let $i \colon X \inj W_nX$ denote the natural map. Since $i^!W_n\omega_X^{\bullet} = \omega_X^{\bullet}$ by construction, we obtain that the natural injection $\omega_X \to i^{\flat}W_n\omega_X$ is an isomorphism (we set $i^{\flat}$ to be the non-derived right adjoint of $i_*$, so that $Ri^{\flat} = i^!$). Thus, the statements after ``Furthermore'' follow immediately by adjunction, so it is enough to show the pullback square property. Consider the commutative diagram \[ \begin{tikzcd}
		W_n\cI \arrow[d, "R^{n - 1}"', two heads] \arrow[r, "\inc"]       & W_n\cJ \arrow[d, two heads] \arrow[rdd, "R^{n - 1}", bend left]     &     \\
		\cI \arrow[r, "\theta"] \arrow[rrd, "\inc"', bend right=20] & W_n\cJ/(V(F_*W_{n - 1}\cI)) \arrow[rd, , "\phi", two heads] &     \\
		&                                                  & \cJ,
	\end{tikzcd} \] where $\theta$ is given by the composition $\cI \cong W_n\cI/V(F_*W_{n - 1}\cI) \inc W_n\cJ/V(F_*W_{n - 1}\cI)$, making the middle square a pushout square. Since the functor $\HHom(-, W_n\omega_X)$ transforms pullback squares into pushout squares, we are left to show that $\HHom(\ker(\phi), W_n\omega_X) = 0$. Note that \[ \ker(\phi) \cong \quot{F_*W_{n - 1}\cJ}{F_*W_{n - 1}\cI}  \] is supported on a subscheme of the form $W_nZ$, with $i_Z \colon Z \inj X$ strict. Hence, we have that $\HHom(\ker(\phi), W_n\omega_X) = \HHom(\ker(\phi), i_Z^{\flat}W_n\omega_X)$. Since $\dim(Z) < \dim(X)$ (recall that $X$ is integral), $W_n\omega_Z^{\bullet} = i_Z^!W_n\omega_X^{\bullet}$ is supported in degrees strictly bigger than those of $W_n\omega_X^{\bullet}$ by \stacksproj{0A7U}, so $i_Z^{\flat}W_n\omega_X = 0$. 
\end{proof}

Let $X$ be a normal variety, let $x \in X$ and assume that $K_X$ is $\bQ$-Cartier at $x$. Let $m > 0$ be the Cartier index of $K_X$, and write $m = p^er$ with $e \geq 0$ and $r > 0$ coprime to $p$. Up to shrinking $X$ around $x$, we may assume that $\cO_X(rp^eK_X) \cong \cO_X$, so we can set $X'$ to be the normalization of $\Spec_X(\bigoplus_{i = 1}^{r - 1}\cO_X(-ip^eK_X))$. We will call the induced quasi-étale $\mu_r$-cover $X' \to X$ a \emph{prime-to-$p$ index cover of $X$ at $x$}.

\begin{sthm}[{\autoref{intro_qFsplit_implies_lc}}]\label{qF_pure_implies_lc_improved}
	Let $X$ be a $n$-quasi-$F^e$-pure, normal variety such that there exists $r > 0$ coprime to $p$ with $rp^eK_X$ is Cartier. If a prime-to-$p$ index cover $X$ at $x \in X$ admits a log resolution, then $X$ has log canonical singularities at $x$.
\end{sthm}
\begin{proof}
	Let $f \colon X' \to X$ be a prime-to-$p$ cover of $X$ at $x$ admitting a log resolution. By construction, $p^eK_{X'}$ is Cartier. Furthermore, since $f$ is quasi-étale, we know by \cite[Proposition 3.24]{Quasi_F^e_splittings_and_quasi-F_regularity} that $X'$ is quasi-$F^e$-pure on a big open subset, and hence quasi-$F^e$-pure by the $S_2$ property. If we could show that $X'$ has log canonical singularities, then the same argument as in the proof of \cite[Proposition 3.5]{Braun_Greb_Langlois_Moraga_Reductive_quotients_of_klt_singularities} would show that $X$ also has log canonical singularities. Thus, we may replace $X'$ by $X$ and assume that $p^eK_X$ is Cartier and admits a log resolution $\pi \colon Y \to X$. It is therefore enough to show that the discrepancies of the exceptional divisors of $Y$ are at least $-1$. Write \[ p^eK_Y + p^eE_- \sim \pi^*(p^eK_X) + p^eE_+, \] and assume by contradiction that $E_-$ admits a component with coefficient strictly bigger than $1$. By \cite[Corollary 2.31]{Kollar_Mori_Birational_geometry_of_algebraic_varieties}, we can make this coefficient as big as needed, so we may assume that $\lfloor E_- \rfloor$ is not a reduced divisor. In particular, the morphism $\cI_{\lfloor E_- \rfloor} \to F^e_*\cI_{\lfloor E_- \rfloor}$ factors through some $\cI_{E'}$ for some $\bZ$-divisor $E' \inc \lfloor E_- \rfloor$ that is strictly smaller than $\lfloor E_- \rfloor$. The same factorization therefore holds at the level of $W_n$, so applying $\HHom(-, W_n\omega_Y)$ and twisting by $W_n\cO_Y((p^e - 1)K_Y + p^eE_- - \lfloor E_- \rfloor)$ gives a commutative diagram \[ \begin{tikzcd}
		{F^e_*W_n\omega_{(Y, \lfloor E_- \rfloor)}(p^e(p^e - 1)K_Y + p^e(p^eE_- - \lfloor E_- \rfloor))} \arrow[d] &  & \cP_2 \arrow[ll, hook'] \arrow[d]           \\
		{W_n\omega_{(Y, E')}((p^e - 1)K_Y + p^eE_- - \lfloor E_- \rfloor)} \arrow[d]                               &  & \cP_1 \arrow[ll, hook'] \arrow[d]           \\
		{W_n\omega_{(Y, \lfloor E_- \rfloor)}((p^e - 1)K_Y + p^eE_- - \lfloor E_- \rfloor)}                        &  & \cO_Y(p^eK_Y + p^eE_-), \arrow[ll, hook']
	\end{tikzcd} \] where $\cP_1$ and $\cP_2$ are defined so that both squares are pullback squares. We will pushforward this down to $X$ to obtain a contradiction with $n$-quasi-$F^e$-purity (which we will only use at the end of the proof), so let us identify each relevant piece. Note that 
	\begin{align*} 
		p^e(p^e - 1)K_Y + p^e(p^eE_- - \lfloor E_- \rfloor) & \sim \pi^*(p^e(p^e - 1)K_X) + p^e(p^e - 1)E_+ + p^e(E_- - \lfloor E_- \rfloor),
	\end{align*} where the part after $\pi^*(p^e(p^e - 1)K_X)$ above is effective and $\pi$-exceptional, so there exists a natural map \[ F^e_*W_n\omega_{(Y,\lfloor E_- \rfloor)}(\pi^*(p^e(p^e - 1)K_X)) \to F^e_*W_n\omega_{(Y, \lfloor E_- \rfloor)}(p^e(p^e - 1)K_Y + p^e(p^eE_- - \lfloor E_- \rfloor)). \] Pushing this down and using \autoref{pushforward_of_witt_is_what_we_think} gives a natural map \[ F^e_*W_n\omega_X(p^e(p^e - 1)K_X) \to F^e_*\pi_*W_n\omega_{(Y, \lfloor E_- \rfloor)}(p^e(p^e - 1)K_Y + p^e(p^eE_- - \lfloor E_- \rfloor)). \] Since the first sheaf is $S_2$ (\stacksproj{0AWE}), both sheaves are torsion-free and this map is an isomorphism on a big open subset, we deduce that it is an isomorphism. By a similar argument, since $p^eK_Y + p^eE_- \sim \pi^*(p^eK_X) +p^eE_+$, we deduce that $\pi_*\cO_Y(p^eK_Y + p^eE_-) \cong \cO_X(p^eK_X)$. Note that $\cP_1 \cong \cO_Y(p^eK_Y + p^eE_- + E' - \lfloor E_- \rfloor)$ by \autoref{pushout_sucks_but_pullback_vibes}, so there is a strict inclusion $\pi_*\cP_1 \inc \cO_X(p^eK_X)$ by \autoref{BPRZ} since $E' - \lfloor E_- \rfloor$ is negative. 
	%	\[ \pi_*\cP_1 \cong \pi_*\cO_Y(\pi^*(p^eK_X) + p^eE_+ + (E' - \lfloor E_- \rfloor )) \cong \cO_X(p^eK_X) \otimes \pi_*\cO_Y(p^eE_+ + (E' - \lfloor E_- \rfloor)). \] The very same argument as in the proof of \autoref{BPRZ}. gives that $\pi_*\cO_Y(p^eE_+ + (E' - \lfloor E_- \rfloor)) \inc \cO_X$ is a strict ideal, to the inclusion $\pi_*\cP_1 \inc \cO_X(p^eK_X)$ is not surjective. 
	Finally, given that $\pi_*W_n\omega_{(Y, \lfloor E_- \rfloor))}((p^e - 1)K_Y + p^eE_- - \lfloor E_- \rfloor)$ is torsion-free and is equal to the $S_2$-sheaf $W_n\omega_X((p^e - 1)K_X)$ on a big open subset, we obtain a natural inclusion \[ \pi_*W_n\omega_{(Y, \lfloor E_- \rfloor))}((p^e - 1)K_Y + p^eE_- - \lfloor E_- \rfloor) \inc W_n\omega_X((p^e - 1)K_X). \] Since $\pi_*$ preserves pullbacks (it is left exact), we obtain by pushing forward and our discussion above a commutative diagram 
	\[ \begin{tikzcd}
		F^e_*W_n\omega_X(p^e(p^e - 1)K_X) \arrow[dd]                                                              &  & \pi_*\cP_2 \arrow[ll, hook'] \arrow[d] \\
		&  & \pi_*\cP_1 \arrow[d, "\neq", hook]     \\
		{\pi_*W_n\omega_{(Y, \lfloor E_- \rfloor)}((p^e - 1)K_Y + p^eE_- - \lfloor E_- \rfloor)} \arrow[d, hook'] &  & \cO_X(p^eK_X) \arrow[ll, hook']     \\
		W_n\omega_X((p^e - 1)K_X),                                                                                 &  &                                     
	\end{tikzcd} \] where the big square is a pullback diagram. In particular, the map $\pi_*\cP_2 \to \cO_X(p^eK_X)$ is not surjective. Let us now see why this contradicts the $n$-quasi-$F^e$-purity of $X$. 
	
	First, it follows from the definition of a pullback that $\pi_*\cP_2$ is also the pullback of the diagram 
	\[ \begin{tikzcd}
		F^e_*W_n\omega_X(p^e(p^e - 1)K_X) \arrow[d] &  &                                  \\
		W_n\omega_X((p^e - 1)K_X)                   &  & \cO_X(p^eK_X). \arrow[ll, hook']
	\end{tikzcd} \] Since $X$ is $n$-quasi-$F^e$-pure, we know by \cite[Proposition 3.19]{Quasi_F^e_splittings_and_quasi-F_regularity} (after twisting by $W_n\cO_X((p^e - 1)K_X)$ and extend from the regular locus of $X$ using the $S_2$ property) that there is a commutative diagram 
	\[ \begin{tikzcd}
		F^e_*W_n\omega_X(p^e(p^e - 1)K_X) \arrow[d] &  &                                  &  &                                           \\
		W_n\omega_X((p^e - 1)K_X)                   &  & \cO_X(p^eK_X) \arrow[ll, hook'] &  & W_n\cO_X(p^eK_X). \arrow[ll] \arrow[llllu]
	\end{tikzcd} \] By definition of a pullback, we therefore obtain a commutative diagram \[ \begin{tikzcd}
	F^e_*W_n\omega_X(p^e(p^e - 1)K_X) \arrow[d] &  & \pi_*\cP_2 \arrow[d] \arrow[ll]    &  &                                         \\
	W_n\omega_X((p^e - 1)K_X)                   &  & \cO_X(p^eK_X) \arrow[ll, hook'] &  & W_n\cO_X(p^eK_X). \arrow[ll] \arrow[llu]
	\end{tikzcd} \] Since the map $W_n\cO_X(p^eK_X) \to \cO_X(p^eK_X)$ is surjective (a set-theoretic lift being for example the Teichmüller lift), we deduce that $\pi_*\cP_2 \to \cO_X(p^eK_X)$ must also be surjective, contradicting what we previously established.
\end{proof}

%\begin{scor}
%	Let $X$ be a normal, quasi-$F^{\infty}$-pure and $\bQ$-Gorenstein threefold. Then $X$ has log canonical singularities.
%\end{scor} 

\bibliographystyle{alpha}
\bibliography{bibliography}

\Addresses

\end{document}

%% file: macros.tex
\documentclass[a4paper,12pt]{amsart}

\usepackage{environ}
\usepackage{fancyhdr}
\usepackage{mabliautoref}
\usepackage{amssymb,amsthm,amsmath}
\RequirePackage[dvipsnames,usenames]{xcolor}
\usepackage{hyperref}
\usepackage{cleveref}
\usepackage{mathtools}
\usepackage{tcolorbox}
\usepackage[all]{xy}
\usepackage{tikz}
\usepackage{tikz-cd}

\makeatletter
\def\fixtikzforbreqn#1#2{%
	\protected\edef#1{\noexpand\ifmmode\mathchar\the\mathcode`#2 \noexpand\else#2\noexpand\fi}%
}
\fixtikzforbreqn\tikz@nonactivesemicolon;
\fixtikzforbreqn\tikz@nonactivecolon:
\fixtikzforbreqn\tikz@nonactivebar|
\fixtikzforbreqn\tikz@nonactiveexlmark!
\makeatother

\usetikzlibrary{decorations.markings,shapes,positioning}
\usepackage{enumitem}
\usepackage{stmaryrd}
\usepackage{xfrac}
\usepackage{fix-cm}
\usepackage{faktor}
\usepackage{minibox}
\usepackage{commath}
\usepackage{chngcntr}
\usepackage{setspace}
\usepackage{array}
\usepackage[toc,page]{appendix}
\usepackage{calligra,mathrsfs}
\usepackage{mathtools}
\usepackage{bbm}
\hypersetup{
bookmarks,
bookmarksdepth=3,
bookmarksopen,
bookmarksnumbered,
pdfstartview=FitH,
colorlinks,backref,hyperindex,
linkcolor=Sepia,
anchorcolor=BurntOrange,
citecolor=Cyan,
filecolor=BlueViolet,
menucolor=Yellow,
urlcolor=OliveGreen
}

\newtcolorbox{activitybox}[1][]{%
	breakable,
	enhanced,
	colback=lightergray,
	boxrule=3pt,
	arc=5pt,
	outer arc=5pt,
	boxsep=10pt,
	colframe=darkergray,
	coltitle=white,
	#1
}

\newcommand{\quot}[2]{%
	\raise1ex\hbox{$#1$}\Big/\lower1ex\hbox{$#2$}%
}

\renewcommand{\lim}{\varprojlim}

\makeatletter\def\Rvarlim@#1#2{%
	\vtop{\m@th\ialign{##\cr
			\hfil$#1\operator@font Rlim$\hfil\cr
			\noalign{\nointerlineskip\kern1.5\ex@}#2\cr
			\noalign{\nointerlineskip\kern-\ex@}\cr}}%
}
\makeatother

\makeatletter\def\Rlim{%
	\mathop{\mathpalette\Rvarlim@{\leftarrowfill@\textstyle}}\nmlimits@
}
\makeatother

\newcommand{\expl}[2]{\underset{\mathclap{\minibox[c]{$\uparrow$\\ \fbox{\footnotesize #2}}}}{#1}}

\newcommand{\inj}{\hookrightarrow}

\newcommand{\HHom}{\mathcal{H}om}

\newcommand{\stacksproj}[1]{{\cite[Tag~\href{http://stacks.math.columbia.edu/tag/#1}{#1}]{Stacks_Project}}}

\newcommand{\stacksprojs}[2]{\cite[Tags ~\href{http://stacks.math.columbia.edu/tag/#1}{#1} and ~\href{http://stacks.math.columbia.edu/tag/#2}{#2}]{Stacks_Project}}

\newcommand{\inc}{\subseteq}

\newcommand{\cni}{\supseteq}

\DeclareMathAlphabet{\mathchanc}{OT1}{pzc}%
                                 {m}{it}

\renewcommand{\set}[2]{\{ \ #1 \  | \ #2 \ \}}

\newcommand{\bD}{\mathbb{D}}

\newcommand{\bN}{\mathbb{N}}

\newcommand{\bP}{\mathbb{P}}
\newcommand{\bQ}{\mathbb{Q}}

\newcommand{\bZ}{\mathbb{Z}}

\newcommand{\scr}{\mathcal}

\newcommand{\cH}{\scr{H}}
\newcommand{\cI}{\scr{I}}
\newcommand{\cJ}{\scr{J}}

\newcommand{\cM}{\scr{M}}

\newcommand{\cO}{\scr{O}}
\newcommand{\cP}{\scr{P}}

\newcommand{\cR}{\scr{R}}

\newcommand{\cX}{\scr{X}}

\DeclareMathOperator{\codim}{codim}

\DeclareMathOperator{\dR}{dR}

\DeclareMathOperator{\im}{{im}}

\DeclareMathOperator{\length}{{length}}

\DeclareMathOperator{\red}{red}

\DeclareMathOperator{\reg}{reg}

\DeclareMathOperator{\Spec}{{Spec}}

\DeclareMathOperator{\Supp}{{Supp}}

\DeclareMathOperator{\crys}{crys}

\DeclareMathOperator{\ShHom}{\mathscr{H}\text{\kern -3pt {\calligra\large om}}\,}

\DeclareFontFamily{OT1}{pzc}{}
\DeclareFontShape{OT1}{pzc}{m}{it}{<-> s * [1.200] pzcmi7t}{}
\DeclareMathAlphabet{\mathpzc}{OT1}{pzc}{m}{it}

\AtBeginDocument{%
	\mathchardef\phialt=\phi
	\mathchardef\phi=\varphi
}

\newcommand{\factor}[2]{\left. \raise 2pt\hbox{\ensuremath{#1}} \right/
        \hskip -2pt\raise -2pt\hbox{\ensuremath{#2}}}

\makeatletter
\renewcommand\subsection{
  \renewcommand{\sfdefault}{pag}
  \@startsection{subsection}%
  {2}{0pt}{.8\baselineskip}{.4\baselineskip}{\raggedright
    \sffamily\itshape\small\bfseries
  }}
\renewcommand\section{
  \renewcommand{\sfdefault}{phv}
  \@startsection{section} %
  {1}{0pt}{\baselineskip}{.8\baselineskip}{\centering
    \sffamily
    \scshape
    \bfseries
}}
\makeatother

\definecolor{gr}{rgb}{0,0.5,0}

\newcommand{\Addresses}{{% additional braces for segregating \footnotesize
		\bigskip
		\footnotesize
		
		\textsc{\'Ecole Polytechnique F\'ed\'erale de Lausanne, SB MATH CAG, MA C3 615 (B\^atiment MA), Station 8, CH-1015 Lausanne, Switzerland}\par\nopagebreak
		\textit{E-mail address}: \texttt{jefferson.baudin@epfl.ch}

}}